\documentclass[10pt]{amsart}
\usepackage{amsmath, amsfonts, fullpage, amsthm, mathpazo, amssymb, color, mathrsfs}

\newcommand{\R}{\mathbf{R}}

\newcommand{\N}{\mathbf{N}}

\newcommand{\E}{\mathbf{E}}

\newcommand{\mcL}{\mathcal{L}}
\newcommand{\mcE}{\mathcal{E}}

\renewcommand{\E}{\mathbf{E}}
\renewcommand{\P}{\mathbf{P}}

\numberwithin{equation}{section}
\theoremstyle{definition}

\theoremstyle{definition}
\newtheorem{remark}[equation]{Remark}
\theoremstyle{definition}

\theoremstyle{definition}

\theoremstyle{lemma}
\newtheorem{assumption}[equation]{Assumption}
\theoremstyle{lemma}
\newtheorem{lemma}[equation]{Lemma}
\theoremstyle{theorem}
\newtheorem{theorem}[equation]{Theorem}
\theoremstyle{proposition}

\theoremstyle{corollary}
\newtheorem{corollary}[equation]{Corollary}

\theoremstyle{corollary}
\theoremstyle{definition}
\newtheorem{example}[equation]{Example}
\theoremstyle{example}

\theoremstyle{proposition}

\title{The Small-Mass Limit for Langevin Dynamics with Unbounded Coefficients and positive friction}
\author{David P. Herzog}
\author{Scott Hottovy}
\author{Giovanni Volpe}
\begin{document}
\maketitle

\begin{abstract}
A class of Langevin stochastic differential equations is shown to converge in the small-mass limit under very weak assumptions on the coefficients defining the equation.  The convergence result is applied to physically realizable examples where the coefficients defining the Langevin equation grow unboundedly either at a boundary, such as a wall, and/or at the point at infinity. 
\end{abstract}

\section{Introduction}

Let $\mathcal{X}\subset \R^n$ be non-empty, open.  We study the following stochastic differential equation
\begin{align}
\label{eqn:main}
dx^m(t) &= v^m(t) \, dt \\
\nonumber m\, dv^m(t)& = [F(x^m(t)) -\gamma(x^m(t)) v^m(t)] \, dt  + \sigma(x^m(t)) \, dB(t)
\end{align}
where $F:\mathcal{X}\rightarrow \R^n$, $\gamma:\mathcal{X}\rightarrow \R^{n\times n}$, $\sigma : \mathcal{X}\rightarrow \R^{n\times k}$, $m>0$ is a constant and $B(t)=(B_1(t), \ldots, B_k(t))^T$ is a $k$-dimensional Brownian motion defined on a probability space $(\Omega, \mathcal{F}, \P)$.  Relation~\eqref{eqn:main} is the standard form of Newton's equation for the position $x^m(t)$ of a particle of mass $m$ subject to thermal fluctuations ($\sigma(x^m(t)) \, dB_t$), friction ( $-\gamma(x^m(t)) v^m(t) \, dt$), and a force ($F(x^m(t))\, dt$).  

The goal of this note is to strengthen the main result in \cite{HMVW_14} concerning the small-mass limit of the position $x^m(t)$.  In essence, provided the friction matrix $\gamma(x)$ is positive-definite for each $x\in \mathcal{X}$, our main result shows that we can still extract convergence of $x^m(t)$ as $m\rightarrow 0$ pathwise on bounded time intervals in probability, without making strong boundedness assumptions on the coefficients $F, \gamma, \sigma$ and their derivatives.  These boundedness assumptions were made in the earlier work \cite{HMVW_14} and have also been made previously in other, related works \cite{freidlin2004,freidlin2011,sancho1982}.  From a physical standpoint, however, there are many natural model equations that do not satisfy these strong boundedness requirements, and therefore the use of the small-mass approximation of the dynamics above is in question.  Such an approximation has been instrumental in estimating chemical reaction rates \cite{kramers1940, smoluchowski1916}, simplifying computations of escape times from potential wells \cite{freidlin,weinan2010}, and answering ergodicity questions \cite{freidlin,weinan2010}.

To see the utility of our general result, we will apply it to three examples describing physically realizable dynamics, including the situation discussed in \cite{volpe2010} (see Section~3). In each of these examples, there is a confining force which grows unboundedly near the boundary of $\mathcal{X}$ (if it is non-empty) and/or near the point at infinity.  This unbounded force translates to, at the very least, unboundedness of some of the coefficients in the model equations \eqref{eqn:main}. Making use of our main result, we will be able to establish convergence in each of these three examples.

Compared with existing results on the small-mass limit with positive friction \cite{freidlin2004,freidlin2011,sancho1982,HMVW_14}, the hypotheses of our main result are extraordinarily weak.  Specifically, we only assume nominal regularity of the coefficients $F$, $\gamma$, $\sigma$ and that the believed limiting dynamics does not leave the set $\mathcal{X}$ in finite time.  It is worth emphasizing that we do not assume that the pair process defined by \eqref{eqn:main} also remains in the natural state space $\mathcal{X}\times \R^n$ for all finite times.  This makes our result more readily applicable because, while it is not always easy to control the family of exit times $\{ \tau_{\mathcal{X}\times \R^n}^m\}_{m>0}$ where $\tau_{\mathcal{X}\times \R^n}^m$ denotes the first exit time of $(x^m(t), v^m(t))$ from $\mathcal{X}\times \R^n$, it is more straightforward to control the exit time $\tau_\mathcal{X}$ of the limiting dynamics from $\mathcal{X}$.  An additional benefit of structuring the hypotheses in this way is that, as a consequence of our result, we gain control of the exit times $\tau_{\mathcal{X}\times \R^n}^m$ for $m>0$ small, in the sense we show that $\tau_{\mathcal{X}\times \R^n}^m \rightarrow \infty$ in probability as $m\rightarrow 0$.

The organization of this paper is as follows.  In Section~\ref{sec:mainres}, we state our main theoretical result (Theorem~\ref{thm:main}).  Section~\ref{sec:examples} gives a few physical, motivating examples for this work. In each example, we will verify that the hypotheses of Theorem~\ref{thm:main} are satisfied using the appropriate Lyapunov methods.  As a consequence of Theorem~\ref{thm:main}, we will therefore obtain the desired convergence as $m\rightarrow 0$ in each physical example studied. In Section~\ref{sec:proof}, we prove Theorem~\ref{thm:main}.

\section{Main Results}
\label{sec:mainres}

The limiting dynamics $x(t)$ will satisfy the It\^{o} stochastic differential equation  
\begin{align}
\label{eqn:limitsde}
dx(t) = [\gamma^{-1}(x(t)) F(x(t)) + S(x(t)) ] \, dt + \gamma^{-1}(x(t)) \sigma(x(t)) \, dB(t) 
\end{align}    
where, adopting the Einstein summation convention, the vector-valued function $S$ satisfies
\begin{align*}
S(x)= \left(\partial_{x_l} [\gamma^{-1}_{ij}(x)]J_{ jl}(x) \right)_{i=1}^n
\end{align*}
and the matrix $J$ solves the Lyapunov equation
\begin{align*}
J \gamma^T + \gamma J = \sigma \sigma^T .  
\end{align*}

To understand on some level how the equation \eqref{eqn:limitsde} could possibly define the limiting dynamics, we can try and formally set $m=0$ in equation~\eqref{eqn:main}, and solve for $v^0(t) \, dt = dx^0(t)$ using the second part of this equation.  This leads us to the following guess for the limiting equation
\begin{align*}
dx^0(t) = \gamma^{-1}(x^0(t))F(x^0(t)) \, dt + \gamma^{-1}(x^0(t)) \sigma(x^0(t)) \, dB(t) ,
\end{align*} 
where there is some ambiguity in how $\gamma^{-1}(x^0(t)) \sigma(x^0(t)) \, dB_t $ should be interpreted using the various conventions of stochastic integrals, e.g. It\^{o}, Stratonovich, Anti-It\^{o} \cite{karatzas, oksendal}.  The different conventions of the stochastic integral do not coincide because, even assuming $\sigma, \gamma$ are sufficiently smooth, as opposed to $\sigma(x^m(t))$ for $m>0$, $\gamma^{-1}(x^0(t)) \sigma(x^0(t))$ does not vary smoothly in $t$.  While one might suspect that the drift term $S(x(t))$ in equation~\eqref{eqn:limitsde} tells one how to interpret $ \gamma^{-1}(x^0(t)) \sigma(x^0(t)) \, dB_t $, this is not quite the case because there can be no relation between the type of stochastic integral and this drift in the most general case \cite{freidlin2011}.  Nevertheless this heuristic, first employed by Smoluchowski in \cite{smoluchowski1916} and later by Kramers in \cite{kramers1940}, serves as a good first step in understanding how some parts of \eqref{eqn:limitsde} arise.  See \cite{HMVW_14} for further, more specific details in how the noise-induced drift term, i.e. $S(x(t))$, in equation~\eqref{eqn:limitsde} is produced.   

Throughout the paper, we will make the following assumptions:
\begin{assumption}[Regularity, Positive-Definite Friction]
\label{assump:reg}
\label{assump:mf}
$F\in C^1( \mathcal{X}:\R^n)$, $\gamma\in C^2( \mathcal{X}: \R^{n\times n})$ and $\sigma \in C^1(\mathcal{X}:\R^{n\times k})$.  Moreover, for each $x\in \mathcal{X}$ the matrix $\gamma(x)$ is positive-definite; that is, for each $x\in \mathcal{X}$ and each $
y\in \R^n_{\neq 0}$ we have that $$(\gamma(x) y, y) >0.$$
\end{assumption}

\begin{assumption}[Non-explosivity of $x(t)$]
\label{assump:et}
The first exit time $\tau_\mathcal{X}$ of $x(t)$ from $\mathcal{X}$ is $\P$-almost surely infinite for all initial conditions $x_0=x\in \mathcal{X}$; that is, for all $x\in \mathcal{X}$
\begin{align*}
\P_x \{ \tau_\mathcal{X} < \infty\} =0.  
\end{align*}
\end{assumption} 

The regularity part of Assumption~\ref{assump:mf} assures that all equations in question make sense locally in time.  Critical to our main result is the positive-definite assumption made on the friction matrix $\gamma$.  This can be seen by taking a glance at equation~\eqref{eqn:limitsde}, for if the matrix $\gamma$ is simply non-negative we expect to get different behavior as $m\rightarrow 0$.  See \cite{freidlin2013} for an example of the small-mass limit when $\gamma$ vanishes on a set.  Assumption~\ref{assump:et} assures that the presumed limiting dynamics $x(t)$ remains in its domain of definition $\mathcal{X}$ for all finite times $t\geq 0$ almost surely.  Different from the previous references \cite{freidlin2004,HMVW_14, sancho1982}, we will not assume that the solution of \eqref{eqn:main} is non-explosive or, more importantly, that either $x(t)$ is contained in a compact subset of $\mathcal{X}$ or the coefficients $F, \gamma, \sigma$ are bounded on $\mathcal{X}$.  We do, however, need control over an additional derivative of $\gamma$.  Nevertheless, this should not be seen as an additional hypothesis, for this is a typical minimalist assumption needed to make sense of the pathwise solution of \eqref{eqn:limitsde} locally in time (see, for example, \cite{khasminskii, reybellet2006}).  An additional difference between our result and previous results is that we need not assume that $\gamma$ is uniformly positive definite on $\mathcal{X}$.  In some sense, however, the size of the smallest positive eigenvalue of $\gamma$ is controlled by non-explosivity (Assumption~\ref{assump:et}) of the solution of the limiting equation.            

Because we will not assume that the process $\{(x_t^m, v_t^m)\}_{t\geq 0}$, $m>0$, remains in $\mathcal{X}\times \R^n$ for all finite times, we will extend this process for times $t\geq \tau_{\mathcal{X}\times \R^n}^m$ where $\tau_{\mathcal{X} \times \R^n}^m$ is the first exit time of $(x_t^m, v_t^m)$ from $\mathcal{X}\times \R^n$.  In particular, letting $\Delta$ be some point not in $\R^n$, we set $x_t^m=v_t^m=\Delta$ for all times $t\geq   \tau_{\mathcal{X}\times \R^n}^m$.  To measure convergence of $x_t^m$ on the enlarged state space $\mathcal{X}\cup \{ \Delta\}$, let $d_\infty: \mathcal{X} \cup \{ \Delta \} \times \mathcal{X} \cup \{ \Delta\} \rightarrow [0, \infty]$ be given by 
\begin{align*}
d_\infty(x,y) = \begin{cases}
|x-y| & \text{ if } x,y \in \mathcal{X}\\
\infty & \text{ if } x=\Delta \text{ or } y = \Delta
\end{cases}.
\end{align*}
Observe that $d_\infty$ is not quite a metric since $d_\infty(\Delta, \Delta)=\infty$; however, $d_\infty$ satisfies the  remaining properties of a metric.  As we will see, it will serve us well as a slight generalization of a distance.  

We are now prepared to state our main results:

\begin{theorem}
\label{thm:main}
Suppose that Assumption~\ref{assump:mf} and Assumption~\ref{assump:et} are satisfied.  If the process $\{ x(t)\}_{t\geq 0}$ and the extended processes $\{x^m(t)\}_{t\geq 0}$ have the same initial condition $x\in \mathcal{X}$ for all $m>0$, then for every $T, \epsilon >0$
\begin{align*}
\P\bigg\{ \sup_{t\in [0, T]} d_\infty(x^m(t), x(t)) > \epsilon \bigg\} \rightarrow 0\,\,\text{ as }\,\, m \rightarrow 0. 
\end{align*}
\end{theorem}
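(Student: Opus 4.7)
The plan is to prove Theorem~\ref{thm:main} by a localization argument that reduces matters to the bounded-coefficient situation already treated in \cite{HMVW_14}. Fix $T,\epsilon>0$ and $\eta>0$; we want to show $\P\{\sup_{t\in[0,T]} d_\infty(x^m(t),x(t))>\epsilon\}<\eta$ for all sufficiently small $m>0$. First I would choose an exhaustion $\{U_\ell\}_{\ell\in\N}$ of $\mathcal{X}$ by open sets with $\overline{U_\ell}$ compact and $\overline{U_\ell}\subset U_{\ell+1}\subset\mathcal{X}$, and let $\sigma_\ell:=\inf\{t\geq 0: x(t)\notin U_\ell\}$ be the exit time of the limiting process from $U_\ell$. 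Assumption~\ref{assump:et} gives $\sigma_\ell\nearrow\tau_\mathcal{X}=\infty$ almost surely, so I can pick $\ell$ (depending on $T$ and $\eta$) with $\P\{\sigma_\ell<T\}<\eta/2$.

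With $\ell$ fixed, I would next construct globally defined coefficients $F_\ell\in C^1(\R^n:\R^n)$, $\gamma_\ell\in C^2(\R^n:\R^{n\times n})$, $\sigma_\ell\in C^1(\R^n:\R^{n\times k})$ that coincide with $F,\gamma,\sigma$ on $\overline{U_{\ell+1}}$, are globally bounded together with the derivatives required by \cite{HMVW_14}, and where $\gamma_\ell$ is uniformly positive-definite on $\R^n$. This is feasible because on the compact set $\overline{U_{\ell+1}}\subset\mathcal{X}$ the functions $F,\gamma,\sigma$ are smooth and bounded and $\gamma$ has strictly positive eigenvalues by Assumption~\ref{assump:mf}, so a standard smooth cut-off and positive-definite extension produces such $(F_\ell,\gamma_\ell,\sigma_\ell)$. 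Applying \cite{HMVW_14} to the resulting modified Langevin system and its associated limiting equation yields globally defined solutions $(x_\ell^m(t),v_\ell^m(t))$ and $x_\ell(t)$, started from the same initial data as $(x^m,v^m)$ and $x$, satisfying
\begin{align*}
\P\Bigl\{\sup_{t\in[0,T]}|x_\ell^m(t)-x_\ell(t)|>\delta\Bigr\}\longrightarrow 0 \quad\text{as } m\to 0,
\end{align*}
for every $\delta>0$.

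Now comes the coupling step. Set $\delta_\ell:=\mathrm{dist}(\overline{U_\ell},U_{\ell+1}^c)>0$ and $\delta:=\min(\epsilon,\delta_\ell)$. On $\{\sigma_\ell\geq T\}$, pathwise uniqueness for the limit SDE identifies $x(t)=x_\ell(t)\in\overline{U_\ell}$ throughout $[0,T]$. If in addition $\sup_{t\in[0,T]}|x_\ell^m(t)-x_\ell(t)|\leq\delta$, then $x_\ell^m(t)\in U_{\ell+1}\subset\mathcal{X}$ for $t\in[0,T]$, so the original and modified pre-limit SDEs have identical coefficients along this trajectory; pathwise uniqueness then forces $(x^m(t),v^m(t))=(x_\ell^m(t),v_\ell^m(t))$ on $[0,T]$. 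In particular, $\tau_{\mathcal{X}\times\R^n}^m>T$ (the velocity cannot blow up since $(F_\ell,\gamma_\ell,\sigma_\ell)$ are bounded, rendering $v_\ell^m$ non-explosive), so $x^m(t)\neq\Delta$ and $d_\infty(x^m(t),x(t))=|x_\ell^m(t)-x_\ell(t)|\leq\epsilon$. Combining $\P\{\sigma_\ell<T\}<\eta/2$ with the convergence above (applied at threshold $\delta$) closes the estimate, and the statement $\tau_{\mathcal{X}\times\R^n}^m\to\infty$ in probability advertised in the introduction falls out as a byproduct.

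The main obstacle I foresee is the bookkeeping around the coupling: the truncation must be done on a region strictly larger than the one containing the limiting trajectory, so that the pre-limit paths $x_\ell^m$, which can wander slightly outside where $x_\ell$ lives, still satisfy the original (untruncated) SDE. This forces the two-scale choice $\overline{U_\ell}\subset U_{\ell+1}$ and the positive buffer $\delta_\ell$. The velocity variable is not a genuine difficulty because the coefficients depend only on position; once $x^m$ is confined to a compact subset of $\mathcal{X}$ on $[0,T]$, the linear-in-$v$ drift prevents explosion of $v^m$ on that interval.
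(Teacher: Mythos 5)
Your proof is correct. It rests on the same core ingredients as the paper's own argument (cut the coefficients off on a compact exhaustion of $\mathcal{X}$ so that \cite{HMVW_14} applies, then couple the localized processes to the originals by pathwise uniqueness), but the way you glue the estimates together is genuinely different, and in fact cleaner. The paper partitions events according to $\tau_{\mathcal{X}_k}^m \wedge \tau_{\mathcal{X}_k}$ --- the minimum of the exit times of \emph{both} the pre-limit and the limit processes from $\mathcal{X}_k$ --- and since it has no direct control over $\tau_{\mathcal{X}_k}^m$ it must re-invoke the localized convergence at the next level $k+1$ and argue that, on the good event, an early exit of either process forces the \emph{limit} process to exit a smaller region $\mathcal{X}_{N(\epsilon,k)}$; this leads to a triple limiting procedure $m\to 0$, $k\to\infty$, and then a parameter $\delta\downarrow 0$ in $N(\delta)$. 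You avoid this entirely: you condition only on $\sigma_\ell$, the exit of the limit process $x$, which Assumption~\ref{assump:et} controls directly, and you exploit the two-scale structure $\overline{U_\ell}\subset U_{\ell+1}$ by tying the convergence threshold to the geometric buffer, $\delta=\min(\epsilon,\delta_\ell)$ with $\delta_\ell=\mathrm{dist}(\overline{U_\ell},U_{\ell+1}^c)>0$. Then one application of the \cite{HMVW_14} convergence at threshold $\delta$ automatically confines $x_\ell^m$ to $U_{\ell+1}$, pathwise uniqueness identifies $(x^m,v^m)$ with $(x_\ell^m,v_\ell^m)$ on $[0,T]$, and the estimate closes after a single $m\to 0$ limit. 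Your approach also makes Corollary~\ref{cor:nolimexplosion} an immediate byproduct of the same event inclusion. The only places that deserve a touch more care are routine: you should note that $\ell$ is taken large enough that the common initial point $x$ lies in $U_\ell$; and the assertion that $x(t)=x_\ell(t)\in\overline{U_\ell}$ on $\{\sigma_\ell\ge T\}$ uses that the original and truncated \emph{limit} SDEs (including the noise-induced drift $S$) have identical coefficients on the open set $U_{\ell+1}$, which holds because $F,\gamma,\sigma$ and their needed derivatives agree there --- worth saying explicitly since $S$ involves $\partial\gamma^{-1}$. Neither point affects the validity of the argument.
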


\begin{remark}
To emphasize a remark made earlier, an aspect of the theorem that is particularly striking is that we make no explicit assumptions about the exit times $\{\tau^m_{\mathcal{X} \times \R^n}\}_{m>0}$ yet we still obtain pathwise convergence on compact time intervals in probability in $d_\infty$.  As we will see, not making this assumption about the exit times $\{\tau^m_{\mathcal{X} \times \R^n}\}_{m>0}$ is convenient because it is often easier to simply control $\tau_\mathcal{X}$.  Another interesting aspect of the result is that $d_\infty$ was constructed so that it penalizes the process $\{(x_t^m, v_t^m)\}_{t\geq 0}$ infinitely if it has exited $ \mathcal{X} \times \R^n$.  In particular, as a corollary of the proof of the theorem above, by having control over $\tau_\mathcal{X}$ we can obtain control over $\tau_{\mathcal{X}\times \R^n}^m$ for $m>0$, small.  
\end{remark}

\begin{corollary}
\label{cor:nolimexplosion}
Under the hypotheses of Theorem~\ref{thm:main}: For all $T>0$
\begin{align*}
\lim_{m\rightarrow 0}\P\{ \tau^m_{\mathcal{X}\times \R^n} \leq T \}=0.  
\end{align*}
In other words, $\tau_{\mathcal{X} \times \R^n}^m \rightarrow \infty$ in probability as $m\rightarrow 0$.  
\end{corollary}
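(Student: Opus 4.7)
The plan is to derive the corollary as a nearly immediate consequence of Theorem~\ref{thm:main}, exploiting the fact that the extended distance $d_\infty$ was custom-built to blow up precisely on the event that the pair process has left $\mathcal{X}\times \R^n$. No fresh estimates are needed; the entire argument is a containment of events.

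First I would unwind the definitions. By the extension convention preceding Theorem~\ref{thm:main}, on the event $\{\tau^m_{\mathcal{X}\times \R^n}\leq T\}$ we have $x^m(t)=\Delta$ for every $t\in[\tau^m_{\mathcal{X}\times\R^n},T]$, and in particular $x^m(T)=\Delta$. On the other hand, Assumption~\ref{assump:et} guarantees that, up to a $\P$-null set, $x(t)\in\mathcal{X}$ for every $t\geq 0$, so $x(T)\neq \Delta$ almost surely. By the definition of $d_\infty$, this forces $d_\infty(x^m(T),x(T))=\infty$ on this event (modulo a null set), and hence
\begin{align*}
\bigl\{\tau^m_{\mathcal{X}\times \R^n}\leq T\bigr\} \subseteq \Bigl\{\sup_{t\in[0,T]} d_\infty(x^m(t),x(t)) > \epsilon\Bigr\}
\end{align*}
up to a $\P$-null set, for every $\epsilon>0$.

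Second, I would invoke Theorem~\ref{thm:main} with any fixed $\epsilon>0$ (say $\epsilon=1$) to conclude that the probability of the right-hand event tends to zero as $m\to 0$. Monotonicity of $\P$ under the above inclusion then yields
\begin{align*}
\lim_{m\to 0}\P\bigl\{\tau^m_{\mathcal{X}\times \R^n}\leq T\bigr\}=0,
\end{align*}
which is the stated conclusion, and which by standard reasoning is equivalent to $\tau^m_{\mathcal{X}\times \R^n}\to\infty$ in probability.

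There is essentially no obstacle here; all the real work has been folded into Theorem~\ref{thm:main} and into the design of the penalty distance $d_\infty$. The only point that merits a sentence in the written-up proof is the observation that $x(t)\in\mathcal{X}$ almost surely for every $t\in[0,T]$, which is exactly what Assumption~\ref{assump:et} supplies and which is what makes the containment of events valid.
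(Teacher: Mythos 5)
Your proof is correct and uses essentially the same argument as the paper: the paper records the identity
\begin{align*}
\P\Big\{ \sup_{t\in [0,T]} d_\infty(x^m(t),x(t))>\epsilon \Big\}=\P \{ \tau_{\mathcal{X} \times \R^n}^m \leq T\} + \P  \Big\{\sup_{t\in [0,T]} |x^m(t)-x(t)|>\epsilon, \, \tau_{\mathcal{X} \times \R^n}^m > T \Big\},
\end{align*}
which rests on exactly the same two facts you use (on $\{\tau^m_{\mathcal{X}\times\R^n}\leq T\}$ the distance $d_\infty$ becomes $\infty$, and Assumption~\ref{assump:et} keeps $x(t)$ in $\mathcal{X}$), and then invokes Theorem~\ref{thm:main}. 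Your containment-of-events phrasing is just the inequality half of that identity, which is all the corollary requires.
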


\begin{remark}
Under the appropriate moment bounds and non-explosivity of the pair process $(x^m(t), v^m(t))$, one can apply Theorem~\ref{thm:main} to obtain stronger forms of convergence, e.g. convergence in $L^p$ for $p\geq 1$.    
\end{remark}

\section{Examples of Newtonian Dynamics with Unbounded Potentials}
\label{sec:examples}
In this section, we apply Theorem~\ref{thm:main} to physical examples realizable in a
laboratory. In the following, $x(t)$ will denote the position of one or more mesoscopic particles
in a liquid at a well-defined temperature $T$ (e.g. a Brownian particle coupled to a heat bath provided by the liquid, such as the ones experimentally studied in \cite{lanccon2001,volpe2010}). The particle is influenced by a force $F$, friction $\gamma$, and noise coefficient $\sigma$. For such a Brownian particle, the fluctuation-dissipation
relation holds:
\begin{equation}
\label{eq:FDrelation}
\gamma(x) \propto \sigma(x)\sigma^T(x).
\end{equation}

Although in each example there is a confining potential force which grows rapidly near the boundary $\partial \mathcal{X}$ and/or the point at infinity, it will be clear that Assumption~\ref{assump:reg} is satisfied.  Therefore, we will only need to see that Assumption~\ref{assump:et} is satisfied by showing that first exit time $\tau_\mathcal{X}$ of the limiting process $x(t)$ is almost surely infinite for all initial conditions $x\in \mathcal{X}$.  To show that $\tau_\mathcal{X}$ is almost surely infinite, we will use, by now standard, Lyapunov methods \cite{khasminskii,MTIII_93, reybellet2006}.  In particular, in each example we will exhibit a certain type of function $V\in C^2(\mathcal{X}: [0, \infty))$, called a Lyapunov function, which guarantees that 
\begin{align}
\label{eqn:noexp}
\P_x \{\tau_\mathcal{X} < \infty \}=0
\end{align}    
for all initial conditions $x\in \mathcal{X}$. To be more precise, define a sequence of open subsets $\mathcal{X}_k$, $k\in \N$, of $\mathcal{X}$ by 
\begin{align*}
\mathcal{X}_k = \begin{cases}
\{ x\in \mathcal{X} \, : \, \text{distance}(x, \partial \mathcal{X}) > k^{-1}\, \text{ and }\, |x| < k \} & \text{ if } \partial \mathcal{X} \neq \emptyset \\
\{ x\in \R^n \, : \, |x| < k \} & \text{ if } \partial \mathcal{X} = \emptyset
\end{cases}
\end{align*}
and observe that, if $\partial \mathcal{X}=\emptyset$, then $\mathcal{X}=\R^n$ as $\mathcal{X}$ is non-empty and both open and closed.  In each example we will exhibit a function $V\in C^2(\mathcal{X} : [0, \infty))$ satisfying the following two properties:
\begin{itemize}
\item[p1)]  There exists a sequence of positive constants satisfying $C_k \rightarrow \infty$ as $k\rightarrow \infty$ and  
\begin{align*}
V(x) \geq C_k \,\, \text{ for } x\in \mathcal{X}\setminus \mathcal{X}_k.
\end{align*}
\item[p2)]  There exist positive constants $C, D$ such that for all $x\in \mathcal{X}$
\begin{align*}
\mathcal{L}V(x) \leq C V(x) + D,
\end{align*}
where $\mathcal{L}$ denotes the infinitesimal generator of the Markov process $x(t)$. 
\end{itemize}
It follows that the existence of a function $V\in C^2(\mathcal{X}: [0, \infty))$ satisfying p1) and p2) above gives that $\P_x\{ \tau_{\mathcal{X}} < \infty\} =0$ for all $x\in \mathcal{X}$ (See, for example, Theorem~2.1 of \cite{MTIII_93}).

\subsection{Gravity and electrostatics}
We first prove convergence for the experimental example in
\cite{volpe2010} which originally motivated this work.  In \cite{volpe2010}, a Brownian
particle is in a vertical cylinder of finite height $b-a$ filled with water and the horizontal motion of the particle is assumed to be independent from its vertical motion. Therefore, $x(t)$ denotes the (one-dimensional) vertical position of the particle at time $t$ and the natural state space $\mathcal{X}$ of $x(t)$ is given by the open interval $(a,b)$ with $0\leq  a< b< \infty$.  The conservative forces acting on the particle are
given by the potential function
\begin{equation}
\label{eq:potfun}
U(x) ={B\over\kappa} e^{-\kappa (x-a)}+{B\over\kappa} e^{-\kappa (b-x)}+G_{\rm eff}x+ {e^{-\lambda (x-a)}\over (x-a)} + {e^{-\lambda (b-x)}\over (b-x
)}.
\end{equation}
The first two terms are due to double layer particle-wall forces, with $\kappa^{-1}$ the Debye length and $B>0$ a prefactor depending on the surface charge densities. The third term accounts for the effective gravitational contribution $G_{\rm eff} = \frac{4}{3}\pi R^3 (\rho_{\rm p} - \rho_{\rm s}) g$, with $g$ the gravitational acceleration constant, $R$ the radius of the particle, $\rho_{\rm p}$ the density of the particle and $\rho_{\rm s}$ the density of the fluid. Note that the the value of the first three terms of the potential at $x=a,b$ is finite but very large (as the prefactor $B$ is on the order of thousands of $k_{\rm B}T$); thus, to assure that the particle remains in the cylinder, the last two terms model ``soft walls'' at $x=a$ and $b$ and are fast-decaying away from the boundary with $\lambda \gg \kappa$. The forces are given by
\begin{equation*}
F(x) = -U'(x) 
\end{equation*}
and the friction coefficient is 
\begin{equation*}
\gamma(x) = \frac{k_BT}{D(x)},
\end{equation*}
where $D(x)$ is a hydrodynamic diffusion coefficient due to effects of particle wall
interactions. The exact form of $D$ is an infinite sum and can be found in \cite{brenner}. For our analysis, it is enough to know $D(x)\in C^2([a,b]:(0,\infty))$ with $D(a) = D(b) =0$, $D'(x)>0$ for $x\in[a,(a+b)/2)$, $D'(x)<0$ for $x\in((a+b)/2,b]$, and $D'((a+b)/2) = 0$.
Using the fluctuation-dissipation relation, the inertial system is given by
\begin{align*}
dx^m(t) =& v^m(t)\;dt\\
m\, dv^m(t) = &\bigg[F(x^m(t))-
\frac{k_BT}{D(x^m(t))}v^m(t)\bigg ]\;dt +\sqrt{ \frac{2(k_BT)^2}{D(x^m(t))}}\;dB(t),
\end{align*}
where $B(t)$ is a standard, one-dimensional Brownian motion.
The corresponding limiting equation is
\begin{align}
dx(t) =& \frac{F(x(t))D(x(t))}{k_BT}\;dt +
D'(x(t))dt +\sqrt{D(x(t))}\;dB(t).
\end{align}
To prove convergence of $x^m(t)$ to $x(t)$ in the sense described in Theorem~\ref{thm:main}, all we must show is that $\P_x(\tau_{(a,b)}=\infty)=1$ for all $x\in (a,b)$. To do so, we find the appropriate Lyapunov function as described at the beginning of this section. We define our candidate Lyapunov function to be the potential function $U$ and note that $U\in C^2((a,b) :[0, \infty))$ and, moreover, $U$ satisfies p1).  To see that p2) is satisfied, first apply the generator $\mathcal{L}$ of $x(t)$ to $U$ to find that 
\begin{equation}
\mathcal{L}U(x) = \left (-\frac{(U'(x))^2}{k_BT} + \frac{1}{2}U''(x)\right )D(x)+U'(x)D'(x),
\end{equation}
where we have replaced the force by $F(x) = -U'(x)$.  Because $x\mapsto \mathcal{L}U(x)$ is bounded  on every compact interval $[c,d]$ with $a<c$ and $d<b$, to produce the required estimate we focus on the behavior of this function near the endpoints $x=a,b$.  First fix $c\in (a, (a+b)/2)$.  Using the fact that $D(a)=0$, we can apply the mean value theorem to see that there exist constants $c_i>0$ such that for all $x\in (a, c]$
\begin{align*}
\bigg(\frac{(U'(x))^2}{k_B T}-\frac{1}{2}U''(x) \bigg)D(x)&=
 \bigg(\frac{(U'(x))^2}{k_B T}-\frac{1}{2}U''(x)\bigg) (D(x)-D(a))\\
 &=  \bigg(\frac{(U'(x))^2}{k_B T}-\frac{1}{2}U''(x)\bigg) D'(\xi_{x,a}) (x-a)\\
 &\geq \frac{c_1}{(x-a)^3}+ \frac{c_2}{(x-b)^4}  -c_3,  
\end{align*}
where $\xi_{x,a}$ is some point in $[a, c]$. 
By fixing $d\in ((a+b)/2, b)$ and adjusting the positive constants $c_i$ above, one can produce the same bound
\begin{align*}
\bigg(\frac{(U'(x))^2}{k_B T}-\frac{1}{2}U''(x) \bigg)D(x)&=
 \bigg(\frac{(U'(x))^2}{k_B T}-\frac{1}{2}U''(x)\bigg) (D(x)-D(b))\\
 &= \bigg(\frac{(U'(x))^2}{k_B T}-\frac{1}{2}U''(x)\bigg) (-D'(\eta_{x,b}) )(b-x)\\
 &\geq  \frac{c_1}{(x-a)^3} + \frac{c_2}{(x-b)^4} -c_3, 
 \end{align*}
 where $\eta_{x,b}$ is some point in $[d,b]$, which is satisfied for $x\in [d,b)$.  
Additionally, since $D'$ is bounded on $[a,b]$, there exists $C_1, C_2 >0$ such that 
\begin{align*}
|U'(x) D'(x)| \leq \frac{C_1}{(x-a)^2} + \frac{C_2}{(x-b)^2}
\end{align*} 
for all $x\in [a,b]$.  Putting these estimates together we find that $x\mapsto \mathcal{L} U(x)$ is bounded on $(a,b)$.  The bound in p2) then follows immediately.      

\subsection{1D interacting particles} We consider two close Brownian particles suspended in a fluid. If the separation between particles, denoted by $d$, is large enough that the Debye-H\"uckel linearization approximation can be made in the electrostatic potential of a system of ions in an electrolyte, then the DLVO theory \cite{derjaguin1941theory,verwey1999theory} gives the potential between colloidal spheres as
\begin{equation}
	\label{eq:DLVO}
U_{\rm DLVO}(d) = c
\frac{e^{- d/l}}{d},
\end{equation}
where the positive constants $c$ and $l$ depend on various properties of the two particles and $d$ is the separation distance of the particles. The diffusion coefficient $D=D(d)$ satisfies the following: $d\mapsto D(d)\in C^2([0,\infty):[0,\infty))$, $D(0) = 0$,
$D(d)\rightarrow D_{\mbox{SE}}<\infty$ as $d\rightarrow\infty$, $D'(d) >0$ and $D''(d)<0$ for all $0\leq d<\infty$. Additionally, the two particles are contained in a (common) shallow harmonic potential, $k x^2$, where $k$ is small compared to the constants in \eqref{eq:DLVO}. The particles' positions are described in one dimension using the potential function
\begin{equation}
\label{eq:potfun2}
U(x_1,x_2) = \frac{k}{2}(x_1^2+x_2^2) + U_{\rm DLVO}(x_2-x_1).
\end{equation}
Defining $d^m(t) = x_2^m(t)-x_1^m(t)$, the system is described by 
\begin{align*}
dx_i^m(t) = & v_i^m(t)dt\\
m\, dv_i^m(t) = &\Big [ -\partial_{x_i}U(x_1^m(t),x_2^m(t))-\frac{k_BT}{D(d^m(t))}v_i^m(t)\Big]\;dt+\sqrt{ \frac{2(k_BT)^2}{D(d^m(t))}}\;dB_i(t)
\end{align*}
where $i=1,2$, $B_1(t),B_2(t)$ are two standard, one-dimensional, independent Brownian motions.  The corresponding limiting equation is
\begin{align*}
dx_i(t) =& \left [-\partial_{x_i}U(x_1(t),x_2(t))\frac{D(d(t))}{k_BT} + (-1)^i D'(d(t))
\right ]\;dt + \sqrt{2D(d(t))}\;dB_i(t).  
\end{align*}
Here, the natural domain of definition for these processes is $\mathcal{X}\times \R^2$ and $\mathcal{X}$, respectively, where $$ \mathcal{X} = \{(x_1,x_2)\in\R^2: x_1<x_2\}.$$
To apply Theorem~\ref{thm:main}, we again need to see that $\P_x\{\tau_\mathcal{X}=\infty \}=1$ for all initial conditions $x=(x_1, x_2)\in \mathcal{X}$.  We define our candidate Lyapunov function to be the potential $U(x_1,x_2)$ as in \eqref{eq:potfun2} and now check to see that p1) and p2) are satisfied.  One can readily check that p1) is satisfied.  To see p2), apply the generator to $U$ to see that 
\begin{align}
\label{eq:generator1}
\mathcal{L}U(x_1,x_2) =& \left (-\frac{[(\partial_{x_1}U(x_1,x_2))^2+\partial_{x_2}U(x_1,x_2))^2]}{k_BT} + (\partial^2_{x_1}+\partial^2_{x_2})U(x_1,x_2)\right )D(x_2-x_1)\\
&\qquad +\Big[(\partial_{x_1}+\partial_{x_2})U(x_1,x_2)\Big ]D'(x_2-x_1). \nonumber
\end{align}
The partial derivatives above are given by 
\begin{align*}
	\partial_{x_i} U(x_1,x_2) =& k x_i + (-1)^{i-1}\frac{c e^{-(x_2-x_1)/l}}{(x_2-x_1)}\left(\frac{1}{l} + \frac{1}{(x_2-x_1)}\right )\\
	\partial_{x_i}^2 U(x_1,x_2) =& k +\frac{c e^{-(x_2-x_1)/l }}{(x_2-x_1)}\left (\frac{1}{l^2 } + \frac{2}{l (x_2-x_1)}+\frac{2}{(x_2-x_1)^2}\right )
\end{align*} 
and 
\begin{equation*}
	(\partial_{x_1}+\partial_{x_2})U(x_1,x_2) = k(x_1+x_2). 
\end{equation*} 
Using the mean value theorem and the fact that $D(0)=0$, there exist constants $c_i>0$ and $\xi_{x_1,x_2} \geq 0$ such that  
\begin{align*}
&\left (\frac{[(\partial_{x_1}U(x_1,x_2))^2+\partial_{x_2}U(x_1,x_2))^2]}{k_BT} - (\partial^2_{x_1}+\partial^2_{x_2})U(x_1,x_2)\right )D(x_2-x_1)\\
&= \left (\frac{[(\partial_{x_1}U(x_1,x_2))^2+\partial_{x_2}U(x_1,x_2))^2]}{k_BT} - (\partial^2_{x_1}+\partial^2_{x_2})U(x_1,x_2)\right )D'(\xi_{x_1,x_2})(x_2-x_1) \\
&\geq - c_1(x_1^2+x_2^2) - c_2
\end{align*}
for all $(x_1, x_2) \in \mathcal{X}$.  In the estimate above, we have used the facts that 
\begin{align*}
\sup_{\xi \geq 0} D'(\xi) \in (0, \infty) \,\,\text{ and } \,\, \inf_{\xi \in [0, c]} D'(\xi) \in (0, \infty)
\end{align*}
for all $c>0$, as $D''(\xi) < 0$ for $\xi \geq 0$.  Combining the above estimate with the bound
\begin{align*}
|(\partial_{x_1} + \partial_{x_2}) U(x_1, x_2) D'(\xi)| \leq  k D_{\min}' (|x_1|+|x_2|),
\end{align*} 
which is satisfied for all $\xi \in [0, \infty)$ and all $(x_1, x_2) \in \mathcal{X}$, produces the required estimate p2). 

\subsection{Non-conservative forces}
In the previous two examples, one can easily adapt the arguments given there to show that the pair process $(x^m(t),v^m(t))$ never leaves $\mathcal{X}\times \R^n$ for each $m>0$ by simply taking $U(x) + \frac{1}{2}mv^2$ to be our candidate Lyapunov function.  
In this example, we introduce non-conservative forces in a 2D system where finding a
Lyapunov function for the system when $m>0$ is difficult.   This is because there is no
potential function for all of the external forces. 

Adding the rotational force field to the Langevin equations for the Brownian motion of a particle in the $(x_1,x_2)$-plane, the corresponding non-conservative
forces are:
\begin{equation}
\left\{\begin{array}{ccc}
\displaystyle \hat{F}_{x_1}(x_1,x_2) & = & - \gamma\Omega x_2 \\ [12pt]
\displaystyle \hat{F}_{x_2}(x_1,x_2) & = & + \gamma\Omega x_1
\end{array}\right.
\end{equation}
The terms $-\gamma\Omega x_2$ and $+\gamma\Omega x_1$ introduce a coupling between the equations, which becomes apparent in the fact that the cross-correlation is non-zero. This can in fact be realized experimentally by, for example, using transfer of orbital angular momentum to an optically trapped particle \cite{volpe2006torque}.  In addition to the non-conservative forces, the particle is confined to a pore, i.e. a well with radius $C$ centered at $(x_1, x_2)=(0,0)$.  We now define the radially symmetric potential $U(x_1, x_2)$ and the diffusion gradient.  We assume that $U(x_1, x_2)= \mathcal{U}(r^2(x_1,x_2))$ where $r^2(x_1, x_2)=x_1^2+x_2^2$ and $\mathcal{U}\in C^2([0, C^2): [0, \infty))$ satisfies 
\begin{align*}
\mathcal{U}(r) = {B\over\kappa (C^2-r)} e^{-\kappa (C^2-r)}
\end{align*}  
for $r\in [0, C^2)$.  The diffusion gradient is such that for $r\in [0, C)$, $D(r)=\mathcal{D}(r^2)$ where $\mathcal{D}\in C^2([0, C^2]: [0, \infty))$ satisfies $\mathcal{D}(C^2) = 0$ and $\mathcal{D}(r)<D_{\rm SE}$, $-\infty<\mathcal{D}'(r)<0$, $\mathcal{D}''(r)<0$ for $0\leq r\leq C^2$.
Define $r^m(t)^2 ={x_1^m(t)^2+x_2^m(t)^2}$. The full system then becomes
\begin{equation*}
\label{eq:Ex3sys}
\left \{ \begin{array}{rcl}
dx_i^m(t) &=& v_i^m(t)\;dt\\
m\, dv^m_i(t) &= & \left [-(\partial_{x_i} U)(x_1^m(t), x_2^m(t))
-\frac{k_BT}{D(r^m(t))}\Omega x^m_{j}(t)-\frac{k_BT}{D(r^m(t))}v_i^m(t)\right]\;dt + \sqrt{\frac{2(k_BT)^2}{D(r^m(t))}}\;dB_i(t),
\end{array}\right .
\end{equation*}
$i=1,2$, $j\neq i$, and where $B_i(t)$ are two standard, one-dimensional, independent Brownian motions.  The corresponding limiting equation is
\begin{align}
\label{eqn:ex3ld}
dx_i(t) =& \left [-\mathcal{U}'(r^2(t))\frac{2x_i(t)\mathcal{D}(r^2(t))}{ k_BT}-\Omega x_j(t)+2x_i(t)\mathcal{D}'(r^2(t))\right ]\;dt + \sqrt{2 \mathcal{D}(r^2(t))}\;dB_i(t). 
\end{align}
A suitable choice of a Lyapunov function for the dynamics \eqref{eqn:ex3ld} is the potential function $U(x_1, x_2)$. This choice works intuitively because the non-conservative forces are bounded inside the pore and are dominated by the potential function near the boundary.  To see that this intuition is indeed true, note that p1) is clearly satisfied.  To see p2), apply the generator of the process $(x_1(t), x_2(t))$ to $U(x_1, x_2)$ to find that 
\begin{align}
\label{eq:generator}
\mathcal{L}U(x_1, x_2) =& -\frac{4r^2\left (\mathcal{U}'(r^2)\right)^2\mathcal{D}(r^2)}{k_BT} 
+ 4\left ( r^2\mathcal{U}''(r^2)+\mathcal{U}'(r^2)\right )\mathcal{D}(r^2)\\
-&4\Omega x_1x_2\mathcal{U}'(r^2) +4r^2\mathcal{U}'(r^2)\mathcal{D}'(r^2).\nonumber
\end{align}
By assumption $\mathcal{D}(C^2)=0$, and $\mathcal{D}(r)$, $\mathcal{D}'(r)$ are bounded on $[0, C^2]$.  Moreover, $\mathcal{D}'(r)<0$ on $[0, C^2]$.  Thus using  the mean value theorem, we find that there exist constants $c_i >0$ such that 
\begin{align*}
	&\left (\frac{4r^2\left (\mathcal{U}'(r^2)\right)^2}{k_BT} 
- 4\left (r^2\mathcal{U}''(r^2)+\mathcal{U}'(r^2)\right )\right )\mathcal{D}(r^2) \\
&=\left (\frac{4r^2\left (\mathcal{U}'(r^2)\right)^2}{k_BT} 
- 4\left (r^2\mathcal{U}''(r^2)+\mathcal{U}'(r^2)\right )\right )(\mathcal{D}(r^2)-\mathcal{D}(C^2))\\
	&\geq \frac{c_1}{(C^2-r^2)^3}-c_2.
\end{align*}
Additionally since $\mathcal{D}'$ is bounded on $[0, C^2]$, there exists $C_1,C_2>0$ such that
\begin{equation*}
	\left |-2\Omega x_1x_2U'(r^2) + 4r^2U'(r^2)D'(r^2)\right | \leq \frac{C_1}{(C^2-r^2)^2}+C_2. 
\end{equation*}
Putting these estimates together we find that $x\mapsto \mathcal{L} U(x)$ is bounded on $\mathcal{X}$.  Property p2) now follows easily.

\section{Proof of Main Result}
\label{sec:proof}
In this section we prove Theorem~\ref{thm:main} and Corollary~\ref{cor:nolimexplosion}.  The idea underlying the proof of both results is quite natural.  First we will see that due to the structure of equation~\eqref{eqn:main}, for each $m>0$ the first exit time $\tau_\mathcal{X}^m$ of $x^m(t)$ from $\mathcal{X}$ coincides with $\tau_{\mathcal{X}\times \R^n}^m$.  In other words if the process $(x^m(t), v^m(t))$ exits the domain $\mathcal{X} \times \R^n$, then $x^m(t)$ must have exited $\mathcal{X}$.  Once we have control of the stopping times in this way, the goal is to then construct processes $\{ x_k(t)\}_{t \geq 0}$ and $\{ x_k^m(t)\}_{t\geq 0}$, $m>0$ and $k\in \N$, on $(\Omega, \mathcal{F}, \P)$ satisfying the following properties:
\begin{enumerate}
\item $\{ x_k^m(t)\}_{t\geq 0}\subseteq \R^n$ and $\{ x_k(t)\}_{t \geq 0}\subseteq \R^n$, i.e., all processes live in the ambient space $\R^n$ (as opposed to $\mathcal{X}$) for all finite times $t\geq 0$.
\item $\{x_k^m(t)\}_{t\geq 0}$ and $\{ x_k(t)\}_{t\geq 0}$ have continuous sample paths.  
\item  Letting 
\begin{align*}
\mathcal{X}_k = \begin{cases}
\{ x\in \mathcal{X} \, : \, \text{distance}(x, \partial \mathcal{X}) > k^{-1} \text{ or } |x| < k \} & \text{ if } \partial \mathcal{X} \neq \emptyset \\
\{ x\in \mathcal{X}=\R^n \, : \, |x| < k \} & \text{ if } \partial \mathcal{X} = \emptyset
\end{cases}
\end{align*}
and $\tau_{\mathcal{X}_k}^m$, $\tau_{\mathcal{X}_k}$ denote the first exit times of, respectively, $x^m(t)$ and $x(t)$ from $\mathcal{X}_k$:
\begin{align*}
x_k^m(t) \equiv x^m(t)\,\, \text{ for } \,\, 0\leq t< \tau_{\mathcal{X}_k}^m\,\, \text{ and }\,\, x_k(t) \equiv x(t)\,\, \text{ for }0\leq t < \tau_{\mathcal{X}_k} \qquad \P-\text{almost surely}.
\end{align*}
In the definition of $\mathcal{X}_k$ above, we note that if $\partial \mathcal{X}=\emptyset$ then $\mathcal{X}=\R^n$, as $\mathcal{X}$ is non-empty and both open and closed.  
\item For every $\epsilon, T>0$, $k\in \N$ and $x_k^m(0)=x_k(0)=x\in \mathcal{X}$
\begin{align*}
\lim_{m\rightarrow 0}\P\Big\{ \sup_{t\in [0, T]} |x_k^m(t) - x_k(t)| > \epsilon\Big\} =0.  
\end{align*}
\end{enumerate}
The processes $\{ x_k^m(t) \}_{t\geq 0}$ and $\{ x_k(t) \}_{t\geq 0}$ should be thought of as localizations (in time) of our original processes $\{ x^m(t)\}_{t\geq 0}$ and $\{ x(t) \}_{t\geq 0}$ which satisfy the desired convergence as $m\rightarrow 0$ for each $k\in \N$.  Formally taking $k\rightarrow \infty$ and exchanging the order of limits in (4) above we may expect on an intuitive level the convergence to hold.  However, performing such an exchange is nontrivial.  Nevertheless, due to the way the set $\,\mathcal{X}\,$ is stratified by $\{\mathcal{X}_k\}_{k\in \N}$, we will see at the end of this section that this intuition is indeed correct; that is, we can extract convergence given the existence of such approximate processes.  Corollary~\ref{cor:nolimexplosion} will be an easy consequence of the proof of Theorem~\ref{thm:main}.

We begin the section by showing $\tau_\mathcal{X}^m=\tau_{\mathcal{X}\times \R^n}^m$ almost surely for all $m>0$ and by constructing the approximate processes satisfying (1)-(4) above.  Afterwards, we will prove Theorem~\ref{thm:main} and Corollary~\ref{cor:nolimexplosion}.

\begin{lemma}
\label{lem:tc}
Suppose that Assumption~\ref{assump:mf} is satisfied.  Then for each $m>0$ and each initial condition $(x, v)\in \mathcal{X}\times \R^n$ $$\P\{\tau_\mathcal{X}^m= \tau_{\mathcal{X}\times \R^n}^m\}=1.$$  
Moreover, there exist processes $\{ x_k^m(t)\}_{t\geq 0}$ and $\{x_k(t) \}_{t\geq 0}$, $k\in \N$ and $m>0$, on the probability space $(\Omega, \mathcal{F}, \P)$ satisfying properties (1)-(4) above.  
\end{lemma}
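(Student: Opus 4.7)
The argument splits naturally into two parts corresponding to the two claims.

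\textbf{Stopping-time identity.} The inclusion $\tau_{\mathcal{X}\times \R^n}^m \leq \tau_\mathcal{X}^m$ is immediate from the definitions, so the content is the opposite inequality, namely that $v^m$ cannot explode before $x^m$ reaches $\partial \mathcal{X}$. My plan is to localize along the sets $\mathcal{X}_k$: on the stochastic interval $[0, T\wedge \tau_{\mathcal{X}_k}^m]$ the process $x^m$ stays in the compact set $\overline{\mathcal{X}_k}\subset \mathcal{X}$, on which $F, \gamma, \sigma$ are bounded by Assumption~\ref{assump:reg}. Applying It\^o's formula to $|v^m(t)|^2$ produces a drift that is linear in $|v^m|^2$ up to a bounded additive contribution, plus a local-martingale increment, so a Gronwall/BDG argument gives $\E \sup_{t\leq T\wedge \tau_{\mathcal{X}_k}^m}|v^m(t)|^2 < \infty$ for every $T$ and $k$. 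In particular $v^m$ does not explode on $[0, \tau_{\mathcal{X}_k}^m]$, and sending $k\to\infty$ while using $\tau_{\mathcal{X}_k}^m \uparrow \tau_\mathcal{X}^m$ yields the reverse inequality.

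\textbf{Construction of the localized processes.} I will pick smooth cutoffs $\phi_k : \R^n \to [0,1]$ with $\phi_k \equiv 1$ on $\mathcal{X}_k$ and support contained in $\mathcal{X}_{k+1}$, and set
\begin{align*}
F_k = \phi_k F, \qquad \sigma_k = \phi_k \sigma, \qquad \gamma_k = \phi_k \gamma + (1-\phi_k) I,
\end{align*}
with the convention that $\phi_k F$, $\phi_k\sigma$, $\phi_k\gamma$ are taken to be zero off $\mathrm{supp}(\phi_k)$. These extensions inherit the regularity demanded by Assumption~\ref{assump:reg} on all of $\R^n$ and are bounded together with the relevant derivatives, while $\gamma_k$ is uniformly positive-definite because it equals $I$ outside a compact set and is a convex combination of the positive-definite matrices $\gamma(x)$ and $I$ on $\mathrm{supp}(\phi_k)$. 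Now let $(x_k^m, v_k^m)$ solve \eqref{eqn:main} with coefficients $(F_k,\gamma_k,\sigma_k)$ and let $x_k$ solve the corresponding modified version of \eqref{eqn:limitsde}, all on the same probability space $(\Omega,\mathcal{F},\P)$ driven by the same $B$. Because the modified coefficients are globally Lipschitz with bounded derivatives, both families exist globally in time with continuous sample paths, giving (1)--(2). Property (3) is immediate from pathwise uniqueness: on the intervals in question the modified coefficients agree with the originals, so both pairs solve the same SDE with the same initial datum. Property (4) is then obtained by applying the bounded-coefficient small-mass limit theorem of \cite{HMVW_14} to the system $(x_k^m, x_k)$ for each fixed $k$.

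\textbf{Main obstacle.} The one nonroutine point is ensuring that $\gamma_k$ is \emph{uniformly} positive-definite on all of $\R^n$, together with verifying the derivative bounds required by \cite{HMVW_14}. Uniformity of the lower eigenvalue bound follows because $\overline{\mathcal{X}_{k+1}}$ is compactly contained in $\mathcal{X}$ (not merely in $\overline{\mathcal{X}}$), so continuity and pointwise positive-definiteness of $\gamma$ yield a uniform lower bound on its smallest eigenvalue over $\overline{\mathcal{X}_{k+1}}$, a property inherited by the convex combination $\gamma_k$. The derivative bounds on $\gamma_k$ reduce to smoothness of $\phi_k$ together with the $C^2$ regularity of $\gamma$ on the compact set $\overline{\mathcal{X}_{k+1}}$, and are therefore routine. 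Once these checks are carried out, the reduction to \cite{HMVW_14} needed for (4) is clean.
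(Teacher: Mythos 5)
Your construction of the localized processes is essentially the paper's: the paper interpolates with smooth bump functions $g_k$ (equal to $1$ on $\overline{\mathcal{X}}_k$, vanishing off $\mathcal{X}_{k+1}$) between $(F,\gamma,\sigma)$ and some fixed globally nice $(\hat F, c\,\mathrm{Id}, \hat\sigma)$, and your choice of $\phi_k$ together with $(\hat F,\hat\sigma)=(0,0)$ and $c=1$ is simply a special case. The verification of uniform positive-definiteness of $\gamma_k$ by compactness of $\overline{\mathcal{X}}_{k+1}$ and convexity, and the reduction of property (4) to the bounded-coefficient theorem of \cite{HMVW_14}, match the paper exactly.

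Where you diverge is in the proof of $\tau_\mathcal{X}^m=\tau_{\mathcal{X}\times\R^n}^m$. You argue directly: on $[0,T\wedge\tau_{\mathcal{X}_k}^m]$ the coefficients are bounded, so an It\^o/Gronwall/BDG estimate on $|v^m|^2$ shows $v^m$ cannot explode before $\tau_{\mathcal{X}_k}^m$, and $k\to\infty$ finishes. The paper instead harvests the identity from the approximating processes themselves: once $(x_k^m,v_k^m)$ is shown globally non-explosive (via the Lyapunov function $\Phi(x,v)=h(x)+|v|^2$), pathwise uniqueness forces $(x^m,v^m)=(x_k^m,v_k^m)$ on $[0,\tau_{\mathcal{X}_k}^m)$, so $v^m$ automatically remains finite there. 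Both routes are valid; the paper's is slightly more economical because it reuses the globalized process, while yours is more self-contained and makes the mechanism (the effective linear growth of the drift in $|v^m|^2$ when the coefficients are bounded) more explicit. Note, though, that to apply It\^o rigorously in your argument you must localize in $v$ as well, say via $\xi_l^m=\inf\{t: |v^m(t)|\ge l\}$, obtain bounds uniform in $l$, and send $l\to\infty$; as written you silently assume $v^m$ exists on the whole interval whose nonexplosion you are trying to prove. The paper introduces $\xi_l^m$ explicitly for exactly this reason.

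One further imprecision worth flagging: you claim global existence of $(x_k^m,v_k^m)$ because the modified coefficients are globally Lipschitz. That is not quite right. The drift term $\gamma_k(x)v$ in the velocity equation is only \emph{locally} Lipschitz on $\R^n\times\R^n$, since its $x$-partials involve $\partial_x\gamma_k(x)\,v$, which is unbounded in $v$. The conclusion is still correct because the coefficients have linear growth (or, as the paper does, one exhibits a Lyapunov function whose generator is bounded), but the justification you give does not suffice as stated. The globally Lipschitz claim is fine for the limiting equation for $x_k$, since there is no $v$ variable and $\gamma_k^{-1}$, $\gamma_k^{-1}\sigma_k$, $S_k$ are genuinely globally Lipschitz once $\gamma_k$ is uniformly elliptic and $F_k,\sigma_k$ are bounded with bounded derivatives.
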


\begin{proof}[Proof of Lemma~\ref{lem:tc}]
We will start by constructing the desired family of processes $\{ x_k^m(t)\}_{t\geq 0}$ and $\{ x_k(t)\}_{t\geq 0}$.  The conclusion $\tau_\mathcal{X}^m=\tau_{\mathcal{X}\times \R^n}^m$ $\P$-almost surely will be shown in the process of constructing these approximations.  

By the existence of smooth bump functions, for each $k\in \N$ there exists $g_k \in C^\infty(\R^n \, : \,[0, 1])$ satisfying
\begin{align*}
g_k(x) = \begin{cases}
1 & \text{ if } x\in \overline{\mathcal{X}}_k\\
0 & \text{ if } x\in \R^n \setminus \mathcal{X}_{k +1}
\end{cases}.
\end{align*} 
Let $\hat{F}: \R^n \rightarrow \R^n$, $\hat{\sigma}: \R^n \rightarrow \R^{n\times k}$ be $C^\infty$ and have bounded derivatives of all orders, and let $\hat{\gamma}= c \,\text{Id}_{n\times n}$ where $\text{Id}_{n\times n}$ is the $n\times n$ identity matrix and $c>0$ is a fixed, arbitrary constant.  For each $k\in \N$, define $F_k, \sigma_k, \gamma_k$ on $\R^n$ by 
 \begin{align*}
 F_k= g_k F + (1-g_k) \hat{F}, \,\,\,\, \sigma_k = g_k \sigma + (1-g_k)\hat{\sigma}, \,\, \,\, \gamma_k = g_k \gamma + (1-g_k) \hat{\gamma}.
 \end{align*} 
 By construction, observe that $F_k, \gamma_k, \sigma_k$ are bounded and globally Lipschitz on $\R^n$.  Also, letting 
  \begin{align*}
 c_{k}:=\inf_{\substack{x\in \overline{\mathcal{X}}_{k+1}\\y\in \R^n_{\neq 0}}} \frac{(\gamma(x) y, y)}{|y|^2},
 \end{align*}
 we note that $c_k>0$ as $\overline{\mathcal{X}}_{k+1}$ is compact.  Moreover, $\gamma_k \in C^2(\R^n : \R^{n\times n})$ is uniformly positive definite on $\R^n$ since 
 \begin{align*}
 (\gamma_k(x) y, y) &= g_k(x)( \gamma(x) y, y) + (1-g_k(x))c |y|^2 \\
 &\geq g_k(x) c_k|y|^2 + (1-g_k(x)) c|y|^2 \\
 & \geq\min\{ c_k, c\} |y|^2 .   
 \end{align*} 
Now consider the family of $\R^n \times \R^n$-valued
SDEs 
\begin{align}
\label{eqn:SDEapprox}
d x_k^m(t) & = v_k^m(t) \, dt\\
\nonumber m \, dv^m_k(t)& = [F_{k}(x_k^m(t)) - \gamma_k (x_k^m(t)) v_k^m(t) ] \, dt  + \sigma_k(x_k^m(t)) \, dB_t  
\end{align} 
indexed by the parameters $k\in \N$ and $m>0$ and the family of $\R^n$-valued SDEs given by 
\begin{align*}
 dx_k(t) = [\gamma_k^{-1}(x_k(t)) F_k(x_k(t)) -S_k(x_k(t))] \, dt + \gamma_{k}^{-1}(x_k(t)) \sigma_k(x_k(t)) \, dB_t, 
 \end{align*} 
where $S_k$ is the noise-induced drift term determined by $\gamma_k,\sigma_k$.

We now show that $\{ (x_k^m(t), v_k^m(t))\}_{t\geq 0} \subset \R^n \times \R^n$.  By construction, we saw that the coefficients $F_k, \gamma_k, \sigma_k$ are bounded and globally Lipschitz on $\R^n$.  However, the SDE \eqref{eqn:SDEapprox} has only locally Lipschitz coefficients as the term $\gamma_k(x)v$ is a locally Lipschitz function on $\R^n \times \R^n$.  Therefore to see that  $\{ (x_k^m(t), v_k^m(t))\}_{t\geq 0} \subset \R^n \times \R^n$ we construct the appropriate Lyapunov functions.  Pick $h\in C^\infty(\R^n : [0, \infty))$ to satisfy the following two properties: 
\begin{itemize}
\item[(a)] $h(x) \rightarrow \infty$ as $|x|\rightarrow \infty$.
\item[(b)] For each $j=1,\ldots, n$, $\partial_{x_j} h$ is a bounded function on $\R^n$.     
\end{itemize}
Define $\Phi(x,v)= h(x) + |v|^2$ and let $\mathcal{L}_{k}^m$ denote the infinitesimal generator of the Markov process defined by \eqref{eqn:SDEapprox}.  By construction and uniform positivity of the matrix $\gamma_k$, it is not hard to check that for each $m>0$, $k\in \N$ fixed  
\begin{align*}
(x,v) \mapsto \mathcal{L}_k^m \Phi(x, v)
\end{align*}
 is bounded on $\R^n \times \R^n$.  It now follows easily by the standard Lyapunov function theory (see, for example, \cite{reybellet2006}): for each fixed $m>0, k\in \N$ we have that $\{ (x_k^m(t), v_k^m(t))\}_{t\geq 0} \subset \R^n \times \R^n$ almost surely.    

Before verifying that the remaining properties in (1)-(4) are satisfied, let us take a moment to see that $\P \{ \tau_\mathcal{X}^m = \tau_{\mathcal{X} \times \R^n}^m \} =1$ for all $m>0$ and all initial conditions $(x,v)\in \mathcal{X}\times \R^n$.  Trivially, $\tau_{\mathcal{X}\times \R^n}^m\leq \tau_{\mathcal{X}}^m$ almost surely. Next we prove the opposite inequality. Let $\xi_l^m= \inf\{t>0\,: |v^m(t)| \geq l \}$.  Then for all $j, l \in \N$ and all $m>0$, we have the almost sure inequality
\begin{align*}
\tau_{\mathcal{X}_j}^m \wedge \xi_{l}^m \leq \tau_{\mathcal{X}\times \R^n}^m . 
\end{align*}  
The goal is to show that for all $j\in \N$
\begin{align}
\label{eqn:limitst}
\lim_{l\rightarrow \infty} \tau_{\mathcal{X}_j}^m \wedge \xi_{l}^m = \tau_{\mathcal{X}_j}^m  \leq \tau_{\mathcal{X}\times \R^n}^m 
\end{align}  
almost surely.  Taking $j\rightarrow \infty$ in the expression above will then establish the desired conclusion.  By construction and pathwise uniqueness, if $(x_k^m(0), v_k^m(0))=(x^m(0), v^m(0))=(x,v) \in \mathcal{X}\times \R^n$, then 
 \begin{align*}
 \P \bigg\{ \sup_{t\in [0, \tau_{\mathcal{X}_k}^m)} |(x_k^m(t), v_k^m(t))-(x^m(t), v^m(t))| =0\bigg\}=1
 \end{align*}
 as the coefficients defining both pair processes agree on $\mathcal{X}_{k}\times \R^n$.  In particular, we have established \eqref{eqn:limitst} as $v_k^m(t)$, hence $v^m(t)$, has yet to exit $\R^n$ before time $\tau_{\mathcal{X}_k}^m$.

Now we turn our attention to showing the remaining properties in the list (1)-(4).  To see that property (1) is satisfied,  we have already seen using the Lyapunov function $\Phi(x,v)$ that $\{ x_k^m(t)\}_{t\geq 0}\subseteq \R^n$ for all $m>0$ and $k\in \N$.  To see that $\{ x_k(t) \}_{t\geq 0}\subseteq \R^n$ for all $k\in \N$, by construction, the coefficients $\gamma_k^{-1} F_k$ and $\gamma_k^{-1} \sigma_k$ are globally Lipschitz functions on $\R^n$.  This can be seen using the derivative formula
\begin{align*}
\frac{\partial \gamma_k^{-1}}{\partial x_l} (x)  = - \gamma_k^{-1}(x)  \frac{\partial \gamma_k^{-1}}{\partial x_l}(x)\gamma_k^{-1}(x) 
\end{align*}
and using the fact that all matrices on the right above are bounded on $\R^n$.  Also, one can use the fact that the unique solution $J_k$ of the Lyapunov equation $J_k \gamma_k^T+ \gamma_k J_k = \sigma_k \sigma_k^T$ is given by (see \cite{HMVW_14, ortega}) 
 \begin{align*} 
 J_k(x)= -\int_0^\infty \exp(- t \gamma_k(x)) \sigma_k(x) \sigma^T_k(x) \exp(-t \gamma_k^T(x)) \, dt 
 \end{align*}
to see that, too, $S_k$ is globally Lipschitz.  By the standard pathwise existence and uniqueness theorem for solutions of SDEs, we now see that $\{ x_k(t)\}_{t\geq 0}\subseteq \R^n$.  

Properties (2) and (3) follow immediately by construction.  To obtain property (4), apply Theorem~ 1 of \cite{HMVW_14}.      
\end{proof}
We now have all of the tools necessary to prove Theorem~\ref{thm:main} and Corollary~\ref{cor:nolimexplosion}.    
\begin{proof}[Proof of Theorem~\ref{thm:main}]
For any $T, \epsilon, m>0$ we have that 
\begin{align*}
\nonumber \P\Big\{ \sup_{t\in [0,T]} d_\infty(x^m(t),x(t))>\epsilon \Big\}&=\P \Big\{\sup_{t\in [0,T]} d_\infty(x^m(t), x(t))>\epsilon, \, \tau_{\mathcal{X} \times \R^n}^m \leq T \Big\}\\
&= \P \{ \tau_{\mathcal{X} \times \R^n}^m \leq T\} + \P  \Big\{\sup_{t\in [0,T]} |x^m(t)-x(t)|>\epsilon, \, \tau_{\mathcal{X} \times \R^n}^m > T \Big\}
\end{align*}
where on the last line we used the fact that $x(t) \in \mathcal{X}$ for all finite times $t\geq 0$ almost surely.  Applying Lemma~\ref{lem:tc}, we see that for any $T, \epsilon, m >0$ and any $k\in \N$ 
\begin{align*}
\nonumber \P\Big\{ \sup_{t\in [0,T]} d_\infty(x^m(t),x(t))>\epsilon \Big\}&=\P \{ \tau_{\mathcal{X}}^m \leq T\} + \P  \Big\{\sup_{t\in [0,T]} |x^m(t)-x(t)|>\epsilon, \, \tau_{\mathcal{X}}^m > T \Big\}\\
&\leq 2 \P \{ \tau_{\mathcal{X}_k}^m \wedge \tau_{\mathcal{X}_k} \leq T\} + \P \Big\{\sup_{t\in [0,T]} |x^m(t)- x(t)| >\epsilon, \, \tau_{\mathcal{X}_k}^m \wedge \tau_{\mathcal{X}_k} > T\Big\}\\
&\leq 2 \P \{ \tau_{\mathcal{X}_k}^m \wedge \tau_{\mathcal{X}_k}\leq T\} + \P \Big\{\sup_{t\in [0,T]} |x^m_k(t)- x^m_k(t)| >\epsilon\Big\}
\end{align*}
where the first inequality was obtained by partitioning each event $A$ in question as
\begin{align*}
A=( A\cap \{ \tau_{\mathcal{X}_k}^m \wedge \tau_{\mathcal{X}_k} \leq T\}) \cup (A\cap \{\tau_{\mathcal{X}_k}^m \wedge \tau_{\mathcal{X}_k} > T\})
\end{align*}
and estimating their associated probabilities by containment.  By property (4), for each $\epsilon>0$ and $k\in \N$: 
\begin{align*}
\P \Big\{ \sup_{t\in [0,T]} |x^m_k(t)- x_k(t)|>\epsilon \Big\} \rightarrow 0 \,\, \text{ as } \,\, m \rightarrow 0
\end{align*}
so we turn to bounding $\P\big\{\tau_{\mathcal{X}_k}^m \wedge \tau_{\mathcal{X}_k} \leq T\big\}$.  Notice that
\begin{align*}
\P\big\{\tau_{\mathcal{X}_k}^m \wedge \tau_{\mathcal{X}_k} \leq T\big\}& \leq \P\Big\{\tau_{\mathcal{X}_k}^m \wedge \tau_{\mathcal{X}_k} \leq T, \sup_{t\in [0,T]} |x^m_{k+1}(t) - x_{k+1}(t)| \leq \epsilon \bigg\}\\
&\qquad +\P \Big\{\sup_{t\in [0,T]} |x_{k+1}^m(t)- x_{k+1}(t)|>\epsilon  \Big\}. 
\end{align*}
Because we have control of the latter term on the last line above, the crucial observation is that 
for all $\epsilon \in (0,1/2)$, $k\geq 2$ $$\Big\{ \tau_{\mathcal{X}_k}^m \wedge \tau_{\mathcal{X}_k}  \leq T,\, \sup_{t\in [0,T]} |x_{k+1}^m(t) -x_{k+1}(t)|\leq \epsilon  \Big\}\subset \{ \tau_{\mathcal{X}_{N(\epsilon, k)}}\leq T\}$$ 
for some integer $N(\epsilon, k)\geq 1$ satisfying $\lim_{k \rightarrow \infty} N(\epsilon, k)=N(\epsilon)\in \N\cup\{\infty \}$ and, if $N(\epsilon)< \infty$, $\lim_{\epsilon \rightarrow \infty} N(\epsilon) =\infty$.  Putting this all together we obtain the following estimate for all $ m, T>0$, all $\epsilon \in (0,1/2)$, $k\geq 2$  
\begin{align*}
\P\Big\{ \sup_{t\in [0,T]} d_\infty(x^m(t),x(t))>\epsilon \Big\}&\leq 2\P\{\tau_{\mathcal{X}_{N(\epsilon, k)}} \leq T\} + 2 \P \Big\{\sup_{t\in [0,T]} |x_{k+1}^m(t)- x_{k+1}(t)|>\epsilon  \Big\}\\
&\qquad +  \P \Big\{ \sup_{t\in [0,T]} |x^m_k(t)- x_k(t)|>\epsilon \Big\}. 
\end{align*}
Thus for all $T>0$, $\epsilon \in (0,1/2)$, $k\geq 2$ we have 
\begin{align*}
\limsup_{m\rightarrow 0} \P\Big\{ \sup_{t\in [0,T]} d_\infty(x^m(t),x(t))>\epsilon \Big\}&\leq 2\P\{\tau_{\mathcal{X}_{N(\epsilon, k)}} \leq T\}.  
\end{align*}
Taking $k\rightarrow \infty$ in the above we obtain the following inequality  
\begin{align*}
\limsup_{m\rightarrow 0} \P\Big\{ \sup_{t\in [0,T]} d_\infty(x^m(t),x(t))>\epsilon \Big\}&\leq \begin{cases} 2\P\{\tau_{\mathcal{X}_{N(\epsilon)}} \leq T\} & \text{ if } N(\epsilon) \in \N\\
0 & \text{ otherwise}  \end{cases}
\end{align*}
for all $\epsilon \in (0,1/2)$.  In particular, the result is proven in the case when $N(\epsilon)=\infty$.  If $N(\epsilon) \in \N$, then for $\delta \in (0, \epsilon)$, $\epsilon < 1/2$ we have  
\begin{align*}
\limsup_{m\rightarrow 0} \P\Big\{ \sup_{t\in [0,T]} d_\infty(x^m(t),x(t))>\epsilon \Big\} \leq 2\P\{\tau_{\mathcal{X}_{N(\delta)}} \leq T\}.  
\end{align*}
Taking $\delta \downarrow 0$, using the fact that $N(\delta) \rightarrow \infty$ and the fact that $\tau_{\mathcal{X}}= \infty $ almost surely, we obtain the result.  
 
\end{proof}

\begin{proof}[Proof of Corollary~\ref{cor:nolimexplosion}]
This follows easily by Theorem~\ref{thm:main} since we have already seen that 
\begin{align*}
\nonumber \P\Big\{ \sup_{t\in [0,T]} d_\infty(x^m(t),x(t))>\epsilon \Big\}&=\P \{ \tau_{\mathcal{X} \times \R^n}^m \leq T\} + \P  \Big\{\sup_{t\in [0,T]} |x^m(t)-x(t)|>\epsilon, \, \tau_{\mathcal{X} \times \R^n}^m > T \Big\}.  
\end{align*}
\end{proof}

\bibliographystyle{plain}
\bibliography{smaunbHerzog}

\end{document}